\documentclass[psamsfonts]{amsart}

\usepackage{amssymb,amsfonts,amscd}
\usepackage[all,arc]{xy}
\usepackage{enumerate}
\usepackage{mathrsfs}

\newtheorem{thm}{Theorem}[section]
\newtheorem{cor}[thm]{Corollary}
\newtheorem{prop}[thm]{Proposition}
\newtheorem{lem}[thm]{Lemma}

\newtheorem*{thm1}{Theorem 1}

\theoremstyle{definition}

\newtheorem{exmp}[thm]{Example}

\theoremstyle{remark}

\makeatletter
\let\c@equation\c@thm
\makeatother
\numberwithin{equation}{section}

\bibliographystyle{plain}

\newcommand{\upperRomannumeral}[1]{\uppercase\expandafter{\romannumeral#1}}

\title[]{A proof of Milnor conjecture in dimension 3}

\author[]{Jiayin Pan}

\begin{document}

\begin{abstract}
	We present a proof of Milnor conjecture in dimension $3$ based on Cheeger-Colding
	theory on limit spaces of manifolds with Ricci curvature bounded below. It is different from \cite{Liu}
	that relies on minimal surface theory.
\end{abstract}

\maketitle

\section{Introduction}

Milnor \cite{Mi} in $1968$ conjectured that any open $n$-manifold $M$ with $\mathrm{Ric}_M\ge 0$ has a finitely generated fundamental group. This conjecture remains open today. It was proven for manifolds with Euclidean volume growth by Anderson \cite{An} and Li \cite{Li} independently, and manifolds with small diameter growth by Sormani \cite{Sor}. For background and relevant examples regarding Milnor conjecture, see \cite{SS}.

For $3$-manifolds, Schoen and Yau \cite{SY} developed minimal surfaces theory in dimension $3$ and proved that any $3$-manifold of positive Ricci curvature is diffeomorphic to $\mathbb{R}^3$. Recently, based on minimal surface theory, Liu \cite{Liu} proved that any $3$-manifold with $\mathrm{Ric}\ge 0$ either is diffeomorphic to $\mathbb{R}^3$ or its universal cover splits. In particular, this confirms Milnor conjecture in dimension $3$.

There are some interests to find a proof of Milnor conjecture in dimension $3$ not relying on minimal surface theory. Our main attempt is to accomplish this by using structure results for limits spaces of manifolds with Ricci curvature bounded below \cite{Co,CC1,CC2,CN}, equivariant Gromov-Hausdorff convergence \cite{FY} and pole group theorem \cite{Sor}.

\begin{thm1}\label{3}
	Let $M$ be an open $3$-manifold with $\mathrm{Ric}_M\ge 0$, then $\pi_1(M)$ is finitely generated.
\end{thm1}

For any open $3$-manifold $M$ of $\mathrm{Ric}_M\ge 0$ and any sequence $r_i\to\infty$, by Gromov's precompactness theorem \cite{Gro2}, we can pass to some subsequences and consider tangent cones at infinity of $M$ and its Riemannian universal cover $\widetilde{M}$ coming from the sequence $r_i^{-1}\to 0$:
\begin{center}
	$\begin{CD}
	(r^{-1}_i\widetilde{M},\tilde{p}) @>GH>> 
	(C_\infty\widetilde{M},\tilde{o})\\
	@VV\pi V @VV\pi V\\
	(r^{-1}_iM,p) @>GH>> (C_\infty M,o).
	\end{CD}$
\end{center}
We roughly illustrate our approach to prove Theorem \ref{3}. If $\pi_1(M,p)$ is not finitely generated, then we draw a contradiction by choosing some sequence $r_i\to\infty$ and eliminating all the possibilities regarding the dimension of $C_\infty \widetilde{M}$ and $C_\infty M$ above in the Colding-Naber sense \cite{CN}, which are integers $1,2$ or $3$.
 
We also make use of some reduction results by Wilking \cite{Wi} and Evans-Moser \cite{EM}. The first reduces any non-finitely generated fundamental groups to abelian ones in any dimension, while the latter further reduces abelian non-finitely generated ones to some subgroup of the additive group of rationals in dimension $3$. In particular, we can assume that $\pi_1(M)$ is torsion free if it is not finitely generated. One observation is that, if $\pi_1(M,p)$ is torsion free, then in the space $(C_\infty\widetilde{M},\tilde{v},G)$ above, the orbit $G\cdot\tilde{v}$ is not discrete (See Corollary \ref{non_dis_orb_cor}). This observation plays a key role in the proof.

The author would like to thank Professor Xiaochun Rong and Professor Jeff Cheeger for suggestions during the preparation of this note.

\section{Proof of Theorem \ref{3}}

We start with the following reductions by Wilking and Evan-Moser.

\begin{thm}\cite{Wi}\label{red_W}
	Let $M$ be an open manifold with $\mathrm{Ric}_M\ge 0$.
	If $\pi_1(M)$ is not finitely generated, then it contains a non-finitely generated abelian subgroup.
\end{thm}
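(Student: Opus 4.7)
The plan is to argue by contradiction: assume $\pi_1(M)$ is not finitely generated while every abelian subgroup of $\pi_1(M)$ is finitely generated, and produce a non-finitely generated abelian subgroup as a limit of subgroups built from the geometry of $\widetilde{M}$.

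The first step is to apply Milnor's classical volume-comparison argument together with Gromov's polynomial growth theorem. Since $\mathrm{Ric}_M \geq 0$, for every finitely generated subgroup $H \leq \pi_1(M)$ the orbit $H\cdot\tilde{p} \subset \widetilde{M}$ has at most polynomial growth of degree $n=\dim M$, so $H$ is virtually nilpotent with nilpotency step and torsion-free rank bounded by constants depending only on $n$. In particular, every finitely generated subgroup of $\pi_1(M)$ carries a nilpotent structure with uniform bounds.

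The central step uses equivariant Gromov--Hausdorff convergence. Write $\pi_1(M)=\bigcup_i \Gamma_i$ as a strictly ascending union of finitely generated subgroups and pick new generators $g_i\in \Gamma_i\setminus\Gamma_{i-1}$. After choosing scales $r_i\to\infty$ for which $d(g_i\tilde{p},\tilde{p})/r_i$ stays bounded, apply Fukaya--Yamaguchi equivariant convergence to $(r_i^{-1}\widetilde{M},\tilde{p},\Gamma_i)$ to obtain a limit $(Y,y_\infty,G)$. Cheeger--Colding regularity shows that $G$ is a Lie group; the splitting theorem, applied to one-parameter subgroups of the identity component $G_0$, forces each to give an $\mathbb{R}$-factor of $Y$ and ultimately forces $G_0$ to be abelian of rank at most $n$.

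The main obstacle is the third step: transferring the abelian structure of $G_0$ back to $\pi_1(M)$ itself. One must identify elements of $\Gamma_i$ that approximate elements of $G_0$ under the convergence and verify that the subgroup they generate in $\pi_1(M)$ is itself abelian (or virtually so, up to a controlled finite-index passage), and then extract an ascending chain of such subgroups whose union remains non-finitely generated, contradicting the standing hypothesis. This lifting step requires uniform control of the equivariant convergence across all scales rather than merely at the limit, and exploits the uniform bound on nilpotency step from the first step; this interplay between group-theoretic bounds and geometric convergence is the delicate part of Wilking's technique.
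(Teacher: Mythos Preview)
The paper does not prove this statement at all; it is quoted directly from Wilking \cite{Wi} and used as a black box, so there is no in-paper argument against which to compare your outline.

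That said, the gap you yourself flag in the third step is genuine and not merely a matter of bookkeeping. Knowing that the identity component $G_0$ of the limit group is abelian does not by itself produce an abelian subgroup of $\pi_1(M)$: elements $\gamma,\gamma'\in\Gamma_i$ that approximate elements of $G_0$ under the equivariant convergence only have commutator $[\gamma,\gamma']$ of small displacement \emph{in the rescaled metric $r_i^{-1}\widetilde{M}$}, which says nothing about $[\gamma,\gamma']$ being trivial in $\Gamma$. There is no general mechanism converting ``almost commuting at one scale'' into ``commuting''; the tools that come closest (the generalized Margulis lemma, the Kapovitch--Wilking nilpotent structure theorem) deliver at best a \emph{nilpotent} subgroup of bounded index, and already require machinery at least as deep as the statement you are after. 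Even granting such a nilpotent lift, you would still owe the further step of extracting a non-finitely generated abelian subgroup from it, so the equivariant-convergence route as sketched does not close.

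For comparison, Wilking's actual argument stays closer to group theory once Gromov's theorem has been invoked. After your Step~1 (every finitely generated subgroup of $\pi_1(M)$ is virtually nilpotent with rank bounded in terms of $n$), he shows---using the geometry of the covering action but without passing to a limit Lie group---that $\pi_1(M)$ itself contains a nilpotent subgroup $N$ of finite index. If $\pi_1(M)$ is infinitely generated then so is $N$, and an infinitely generated nilpotent group always contains an infinitely generated abelian subgroup: some successive quotient in the lower central series must fail to be finitely generated, and one pulls back generators. The reduction to the abelian case is thus an algebraic consequence of the nilpotent structure, not a lift from a Gromov--Hausdorff limit.
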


\begin{thm}\cite{EM}\label{red_EM}
	Let $M$ be a $3$-manifold. If $\pi_1(M)$ is abelian and not finitely generated, then $\pi_1(M)$ is torsion free.
\end{thm}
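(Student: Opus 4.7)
The plan is to prove the contrapositive using classical $3$-manifold topology: if $\pi_1(M)$ is abelian and contains a torsion element, then $\pi_1(M)$ is finitely generated. After passing to the orientation double cover if necessary, we may reduce to the case where $M$ is orientable; the residual subtlety of order-$2$ torsion disappearing in the index-$2$ cover can be handled by a parallel argument with the projective-plane theorem, which I address at the end.

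Next I would invoke the prime decomposition of $3$-manifolds. Since $\pi_1(M)$ is non-trivial and abelian, $M$ cannot be a non-trivial connected sum, because otherwise $\pi_1(M)$ would be a non-trivial free product, which is never abelian. Hence $M$ is prime. Among orientable prime $3$-manifolds, either $M \cong S^2 \times S^1$, in which case $\pi_1(M) \cong \mathbb{Z}$ is finitely generated (and torsion free), or $M$ is irreducible.

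For irreducible orientable $M$, the sphere theorem of Papakyriakopoulos gives $\pi_2(M) = 0$. Since $\pi_1(M)$ is infinite (being non-finitely generated), the universal cover $\widetilde{M}$ is a non-compact simply connected $3$-manifold with $\pi_2(\widetilde{M}) = 0$; moreover $H_k(\widetilde{M}) = 0$ for $k \geq 3$ by dimension and non-compactness. By iterated Hurewicz, all higher $\pi_k(\widetilde{M})$ vanish, and Whitehead's theorem then yields that $\widetilde{M}$ is contractible. A hypothetical torsion element $g \in \pi_1(M)$ of order $n > 1$ would generate a free $\mathbb{Z}/n$-action on $\widetilde{M}$, with quotient a $3$-dimensional model of $K(\mathbb{Z}/n, 1)$. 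This contradicts the fact that $H^k(\mathbb{Z}/n; \mathbb{Z})$ is non-zero for infinitely many $k$ (being $2$-periodic in positive degrees), while any $3$-manifold has vanishing cohomology in degrees above $3$.

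The main obstacle I expect is tightening the orientability reduction when the only torsion in $\pi_1(M)$ has order $2$, since such torsion need not lift to the orientation double cover. I would deal with this by applying Epstein's projective-plane theorem directly in the non-orientable setting (yielding $\pi_2(M) = 0$ in the $\mathbb{P}^2$-irreducible case) and repeating the cohomological-dimension contradiction; the remaining non-$\mathbb{P}^2$-irreducible prime piece is the twisted $S^2$-bundle over $S^1$, whose fundamental group is $\mathbb{Z} \oplus \mathbb{Z}/2$, still finitely generated.
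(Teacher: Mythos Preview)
The paper does not supply its own proof of this theorem: it is quoted directly from Evans--Moser \cite{EM}, with the added remark that \cite{EM} in fact proves the stronger statement that such a $\pi_1(M)$ embeds in the additive rationals. So there is no in-paper argument to compare against; your proposal is being measured against the cited literature result.

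Your outline is essentially the standard route (prime decomposition, sphere/projective-plane theorem, asphericity of the universal cover, and the cohomological-dimension obstruction to torsion), and for \emph{compact} $M$ it is sound. Two points deserve attention. First, in the paper's application $M$ is open, and Kneser--Milnor prime decomposition is a theorem about compact $3$-manifolds; for non-compact $M$ you must either invoke an appropriate non-compact decomposition result or bypass decomposition entirely by arguing directly that if $\pi_2(M)\neq 0$ then the essential sphere produced by the sphere theorem forces a free-product splitting of $\pi_1(M)$, which, being abelian, pushes all of $\pi_1$ into one side, letting you iterate (with some care that the process is controlled). This is the only genuine gap in your sketch. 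Second, a small slip in the non-orientable tail: the twisted $S^2$-bundle over $S^1$ has $\pi_1\cong\mathbb{Z}$, not $\mathbb{Z}\oplus\mathbb{Z}/2$; the manifold you seem to have in mind with that fundamental group is $\mathbb{R}P^2\times S^1$, which is indeed the relevant non-$\mathbb{P}^2$-irreducible prime piece to account for.
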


Evans-Moser \cite{EM} actually showed that $\pi_1(M)$ is a subgroup of the additive group of rationals. Being torsion free is sufficient for us to prove Theorem \ref{3}.

Gromov \cite{Gro1} introduced the notion of short generators of $\pi_1(M,p)$. By path lifting, $\pi_1(M,p)$ acts on $\widetilde{M}$ isometrically. We say that $\{\gamma_1,...,\gamma_i,...\}$ is a set of short generators of $\pi_1(M,p)$, if
\begin{center}
$d(\gamma_1\tilde{p},\tilde{p})\le d(\gamma\tilde{p},\tilde{p})$ for all $\gamma\in\pi_1(M,p)$,
\end{center}
and for each $i$,
\begin{center}
$d(\gamma_i\tilde{p},\tilde{p})\le d(\gamma\tilde{p},\tilde{p})$ for all $\gamma\in\pi_1(M,p)-\langle\gamma_1,...,\gamma_{i-1}\rangle$,
\end{center}
where $\langle\gamma_1,...,\gamma_{i-1}\rangle$ is the subgroup generated by $\gamma_1,...,\gamma_{i-1}$.

Let $M$ be an open $3$-manifold with $\mathrm{Ric}_M\ge 0$. We always denote $\pi_1(M,p)$ by $\Gamma$. Suppose that $\Gamma$ is not finitely generated, then by Theorems \ref{red_W} and \ref{red_EM}, we can assume that $\Gamma$ is torsion free. Let $\{\gamma_1,...,\gamma_i,...\}$ be an infinite set of short generators at $p$. Since $\Gamma$ is a discrete group acting freely on $\widetilde{M}$, we have $r_i=d(\tilde{p},\gamma_i\tilde{p})\to \infty$. When considering a tangent cone at infinity of $\widetilde{M}$ coming from the sequence $r_i^{-1}\to 0$, we also take $\Gamma$-action into account. Passing to some subsequences if necessary, we assume the following sequences converge in equivariant Gromov-Hausdorff topology \cite{FY}:
\begin{center}
	$\begin{CD}
	(r^{-1}_i\widetilde{M},\tilde{p},\Gamma) @>GH>> 
	(\widetilde{Y},\tilde{y},G)\\
	@VV\pi V @VV\pi V\\
	(r^{-1}_iM,p) @>GH>> (Y=\widetilde{Y}/G,y).
	\end{CD}$
	$(\star)$
\end{center}
Colding-Naber \cite{CN} showed that the isometry group of any Ricci limit space is a Lie group. In particular, $G$ above, as a closed subgroup of $\mathrm{Isom}(\widetilde{Y})$, is a Lie group. 

We recall the dimension of Ricci limit spaces in the Colding-Naber sense \cite{CN}. A point $x$ in some Ricci limit space $X$ is $k$-regular, if any tangent cone at $x$ is isometric to $\mathbb{R}^k$. Colding-Naber showed that there is a unique $k$ such that $\mathcal{R}_k$, the set of $k$-regular points, has full measure in $X$ with respect to any limit renormalized measure (See \cite{CC2,CN}). We regard such $k$ as the dimension of $X$ and denote it by $\dim(X)$. It is unknown whether in general the Hausdorff dimension of $X$ equals to $\dim(X)$. For Ricci limit spaces coming from $3$-manifolds, dimension in the Colding-Naber sense equals to Hausdorff dimension, which follows from Theorem 3.1 in \cite{CC2} and \cite{Hon1}.

As indicated in the introduction, we prove Theorem \ref{3} by eliminating all possibilities regarding the dimension of $Y$ and $\widetilde{Y}$ in ($\star$). There are three possibilities and we rule out each of them, which finishes the proof of Theorem \ref{3}.\\
\noindent\textit{Case 1. $\dim(\widetilde{Y})=3$} (Lemma \ref{not_3});\\
\textit{Case 2. $\dim(Y)=\dim(\widetilde{Y})=2$} (Lemma \ref{not_2});\\
\textit{Case 3. $\dim(Y)=1$} (Lemma \ref{not_1}).\\

\begin{lem}\label{non_dis_orb}
	Let $(M_i,p_i)$ be a sequence of complete $n$-manifolds and $(\widetilde{M}_i,\tilde{p}_i)$ be their universal covers. Suppose that the following sequence converges
	$$(\widetilde{M}_i,\tilde{p}_i,\Gamma_i)\overset{GH}\longrightarrow(\widetilde{X},\tilde{p},G),$$
	where $\Gamma_i=\pi_1(M_i,p_i)$ is torsion free for each $i$. If the orbit $G\cdot\tilde{p}$ is discrete in $\widetilde{X}$, then there is $N$ such that 
	$$\#\Gamma_i(1)\le N$$
	for all $i$, where $\#\Gamma_i(1)$ denotes the number of elements in
	$$\Gamma_i(1)=\{\gamma\in \Gamma_i\ |\ d(\gamma\tilde{p}_i,\tilde{p}_i)\le 1\}.$$
\end{lem}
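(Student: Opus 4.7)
My plan is a contradiction argument: suppose, after passing to a subsequence, that $\#\Gamma_i(1)\to\infty$; I will then produce infinitely many points of $G\cdot\tilde{p}$ in $\overline{B}(\tilde{p},1)$, contradicting its discreteness.

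First, I extract a short non-identity element. Discreteness of $G\cdot\tilde{p}$ makes $G\cdot\tilde{p}\cap\overline{B}(\tilde{p},2)=\{q_1,\ldots,q_m\}$ a finite set with some positive minimum pairwise separation $2\delta_0$. The equivariant Gromov-Hausdorff convergence \cite{FY} assigns to each $\gamma\in\Gamma_i(1)$ an element $g\in G$ with $g\tilde{p}$ close to $\gamma\tilde{p}_i$ (error $o(1)$ as $i\to\infty$), which for $i$ large forces $g\tilde{p}\in\{q_1,\ldots,q_m\}$. Pigeonholing over the $m$ possible values of $q_j$ yields at least $\#\Gamma_i(1)/m$ elements whose orbit points $\gamma\tilde{p}_i$ cluster near a single $q_j$ with cluster diameter $o(1)$. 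Taking ratios of two distinct elements in such a cluster, and using torsion-freeness so that the ratio is non-identity, produces a non-identity $\tau_i\in\Gamma_i$ with $s_i:=d(\tau_i\tilde{p}_i,\tilde{p}_i)\to 0$.

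Second, I argue that the cyclic orbit $\{\tau_i^k\tilde{p}_i\}_{k\in\mathbb{Z}}$ is unbounded in $\widetilde{M}_i$. Since $\Gamma_i$ acts freely and properly discontinuously on $\widetilde{M}_i$, the set $\Gamma_i(R)$ is finite for every $R>0$. As $\Gamma_i$ is torsion-free, $\tau_i$ has infinite order, so the cyclic subgroup $\langle\tau_i\rangle\cong\mathbb{Z}$ cannot be contained in any $\Gamma_i(R)$, and its orbit at $\tilde{p}_i$ must leave every bounded set. Consequently for every $t\in(0,1)$ the integer $k_i(t):=\min\{k\ge 1\,:\,d(\tau_i^k\tilde{p}_i,\tilde{p}_i)\ge t\}$ is well-defined, and by minimality plus the triangle inequality $d(\tau_i^{k_i(t)}\tilde{p}_i,\tilde{p}_i)\in[t,t+s_i]$, so this displacement tends to $t$ as $i\to\infty$.

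Finally, passing to a diagonal subsequence over a countable dense set of $t$'s in $(0,1)$, the orbit points $\tau_i^{k_i(t)}\tilde{p}_i$ converge to limit points $p_t\in G\cdot\tilde{p}$ with $d(p_t,\tilde{p})=t$; that limits of $\Gamma_i$-orbit points lie in the $G$-orbit follows from the equivariant Gromov-Hausdorff convergence. Since the $p_t$ have pairwise distinct distances to $\tilde{p}$, they are distinct, giving infinitely many points of $G\cdot\tilde{p}\cap\overline{B}(\tilde{p},1)$ and contradicting discreteness. The main delicacy is the second step, which essentially uses both the torsion-freeness of $\Gamma_i$ and the properness of its action to guarantee that the cyclic orbit escapes to infinity — precisely the ingredient that would fail for groups with torsion.
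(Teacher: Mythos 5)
Your proposal is correct and follows essentially the same route as the paper: the core mechanism in both is that a nontrivial element of the torsion-free group $\Gamma_i$ with displacement tending to $0$ has an unbounded cyclic orbit, so its powers produce orbit points at every distance scale and force $G\cdot\tilde{p}$ to be non-discrete. The paper packages this as a direct claim (any $\gamma_i\to g$ fixing $\tilde{p}$ must eventually be trivial, whence $\Gamma_i(1)$ injects into a finite set), while you run the contrapositive via a pigeonhole argument and spell out the power-taking step that the paper leaves implicit.
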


\begin{proof}
	We claim that if a sequence $\gamma_i\in\Gamma_i$ such that $\gamma_i\overset{GH}\to g\in G$ with $g$ fixing $\tilde{p}$, then $g=e$, the identity element, and $\gamma_i=e$ for all $i$ sufficiently large. In fact, suppose that $\gamma_i\not= e$ for some subsequence. Since $\gamma_i$ is torsion free, we always have $\mathrm{diam}(\langle\gamma_i\rangle\cdot \tilde{p}_i)=\infty$. Together with $d(\gamma_i\tilde{p}_i,\tilde{p}_i)\to 0$, we see that $G\cdot\tilde{p}$ can not be discrete, which contradicts with the assumption.
	
	Therefore, there exists $i_0$ large such that for all $g\in G(2)$ and any two sequences with $\gamma_i\overset{GH}\to g$ and $\gamma'_i\overset{GH}\to g$, $\gamma_i=\gamma'_i$ holds for all $i\ge i_0$. In particular, we conclude that
	$$\#\Gamma_i(1)\le \# G(2)<\infty$$
	for all $i\ge i_0$.
\end{proof}

\begin{cor}\label{non_dis_orb_cor}
	Let $(M,p)$ be an open $n$-manifold with $\mathrm{Ric}_M\ge 0$ and $(\widetilde{M},\tilde{p})$ be its universal cover. Suppose that $\Gamma=\pi_1(M,p)$ is torsion free, then for any $s_i\to \infty$ and any convergent sequence
	$$(s_i^{-1}\widetilde{M},p,\Gamma)\overset{GH}\longrightarrow(C_\infty\widetilde{M},\tilde{o},G),$$
	the orbit $G\cdot\tilde{v}$ is not discrete.
\end{cor}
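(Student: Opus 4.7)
The plan is to derive this corollary as a direct application of Lemma \ref{non_dis_orb} to a constant sequence of manifolds with rescaled metrics. I would set $M_i := (M, s_i^{-1} g)$, so that the universal covers are $(\widetilde{M}_i, \tilde{p}_i) = (s_i^{-1}\widetilde{M}, \tilde{p})$ and each $\Gamma_i = \Gamma$ is torsion free by hypothesis. The equivariant Gromov--Hausdorff convergence postulated in the statement of the corollary is then precisely the input required by the lemma.

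Arguing by contradiction, I would assume the orbit $G \cdot \tilde{o}$ is discrete in $C_\infty\widetilde{M}$. Lemma \ref{non_dis_orb} would then supply a uniform bound $N$ with $\#\Gamma_i(1) \le N$ for all $i$. Translating a unit ball in the rescaled metric on $\widetilde{M}_i$ back to a ball of radius $s_i$ in the original metric on $\widetilde{M}$, this reads
$$\#\{\gamma \in \Gamma : d(\gamma\tilde{p}, \tilde{p}) \le s_i\} \le N \quad \text{for every } i.$$

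For the contradiction I would invoke proper discontinuity of the $\Gamma$-action on $\widetilde{M}$: the sets $\{\gamma \in \Gamma : d(\gamma\tilde{p}, \tilde{p}) \le R\}$ are finite for each $R > 0$ but exhaust $\Gamma$ as $R \to \infty$. In the regime where the corollary is invoked $\Gamma$ is infinite (a torsion-free finite group is trivial, in which case $G$ is trivial as well and the statement would be vacuous, but the paper uses the corollary when $\Gamma$ is non-finitely generated and hence infinite). Therefore these cardinalities tend to $\infty$ along $s_i \to \infty$, contradicting the uniform bound $N$.

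The substantive content is carried by Lemma \ref{non_dis_orb}; the present corollary is essentially bookkeeping, translating rescaling of the metric into the form expected by the lemma. I do not anticipate any genuine obstacle beyond being careful with this identification and with the trivial edge case above.
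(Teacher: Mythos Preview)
Your proposal is correct and essentially identical to the paper's proof: the paper also argues by contradiction via Lemma~\ref{non_dis_orb}, obtaining $\#\Gamma(s_i)\le N$ and contradicting $\#\Gamma(s_i)\to\infty$. The paper attributes the latter simply to $\Gamma$ being torsion free, tacitly assuming nontriviality---your remark about the trivial edge case is a fair observation, though irrelevant in the intended application.
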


\begin{proof}
	The proof follows directly from Lemma \ref{non_dis_orb}. If $G\cdot\tilde{o}$ is discrete, then there is $N$ such that $\#\Gamma(s_i)\le N$ for all $i$. On the other hand, $\#\Gamma(s_i)\to \infty$ because $\Gamma$ is torsion free. A contradiction. 
\end{proof}

\begin{lem}\label{non_cnt_orb}
	Let $(M,p)$ be an open $n$-manifold with $\mathrm{Ric}_M\ge 0$ and $(\widetilde{M},\tilde{p})$ be its universal cover. Suppose that $\Gamma=\pi_1(M,p)$ has infinitely many short generators $\{\gamma_1,...,\gamma_i,...\}$. Then in the following tangent cone at infinity of $\widetilde{M}$
	$$(r_i^{-1}\widetilde{M},p,\Gamma)\overset{GH}\longrightarrow(\widetilde{Y},\tilde{y},G),$$
	the orbit $G\cdot\tilde{y}$ is not connected, where $r_i=d(\gamma_i\tilde{p},\tilde{p})\to\infty$.
\end{lem}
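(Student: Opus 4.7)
The plan is to argue by contradiction, exploiting the ``gap'' in $\Gamma\cdot\tilde p$ built into the definition of short generators. Set $H_j:=\langle\gamma_1,\ldots,\gamma_j\rangle$ and $d_i:=r_i^{-1}d$. The very choice of $\gamma_i$ as the $i$-th short generator says that every $\gamma\in\Gamma\setminus H_{i-1}$ satisfies $d(\gamma\tilde p,\tilde p)\ge r_i$, equivalently $d_i(\gamma\tilde p,\tilde p)\ge 1$. Consequently, in each rescaled cover $(r_i^{-1}\widetilde M,\tilde p)$ no $\Gamma$-translate of $\tilde p$ coming from outside $H_{i-1}$ lies at $d_i$-distance less than $1$ from $\tilde p$.

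Suppose for contradiction that $G\cdot\tilde y$ is connected. After passing to a subsequence assume $\gamma_i\tilde p\to z\in\widetilde Y$; then $d(z,\tilde y)=1$ and $z\in G\cdot\tilde y$. Because $G$ is a Lie group, its orbit $G\cdot\tilde y=G/G_{\tilde y}$ is a connected homogeneous manifold, hence path connected. I pick a continuous path $\sigma\colon[0,1]\to G\cdot\tilde y$ from $\tilde y$ to $z$, fix $\varepsilon\in(0,1)$, and subdivide $\sigma$ into points $\tilde y=\sigma(t_0),\sigma(t_1),\ldots,\sigma(t_m)=z$ with consecutive distances less than $\varepsilon/3$.

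Using equivariant Gromov-Hausdorff convergence \cite{FY}, for each $i$ sufficiently large I would lift this subdivision to $\Gamma$ by choosing $\tau^{(i)}_j\in\Gamma$ with $d_i(\tau^{(i)}_j\tilde p,\sigma(t_j))<\varepsilon/3$, and taking $\tau^{(i)}_0=e$ and $\tau^{(i)}_m=\gamma_i$ (both legal since $\tilde p$ tracks $\tilde y$ and $\gamma_i\tilde p\to z$). The resulting chain $e=\tau^{(i)}_0,\tau^{(i)}_1,\ldots,\tau^{(i)}_m=\gamma_i$ in $\Gamma$ has all consecutive $d_i$-jumps less than $\varepsilon$. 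Since $e\in H_{i-1}$ but $\gamma_i\notin H_{i-1}$, there is some index $k$ with $\tau^{(i)}_k\in H_{i-1}$ and $\tau^{(i)}_{k+1}\notin H_{i-1}$; then $\tau^{(i)}_{k+1}(\tau^{(i)}_k)^{-1}\notin H_{i-1}$, so the gap forces $d_i(\tau^{(i)}_{k+1}\tilde p,\tau^{(i)}_k\tilde p)\ge 1$, contradicting the upper bound $<\varepsilon<1$.

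The delicate step is the lifting: I must approximate a fixed continuous path in $G\cdot\tilde y\subset\widetilde Y$ by chains of $\Gamma$-translates of $\tilde p$ in the rescaled manifolds while forcing the endpoints to be exactly $\tilde p$ and $\gamma_i\tilde p$ and keeping successive jumps arbitrarily small. This is where the interplay between equivariant Gromov-Hausdorff convergence and path connectedness of the Lie group orbit has to be used carefully; once such a chain is in place, the short generator gap yields the contradiction immediately.
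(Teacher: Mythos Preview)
Your argument is correct and rests on the same observation as the paper: in the rescaled metric $r_i^{-1}d$, the short generator $\gamma_i$ has displacement exactly $1$ at $\tilde p$, while every element of $\Gamma\setminus H_{i-1}$ has displacement at least $1$. The paper, however, packages this more directly. Rather than assuming $G\cdot\tilde y$ is connected and lifting a path, it passes the subgroups $H_i=\langle\gamma_1,\ldots,\gamma_{i-1}\rangle$ to the limit together with $\gamma_i$, obtaining a closed subgroup $H\le G$ and an element $g\in G$ with $d(g\tilde y,H\cdot\tilde y)=1$; since every element of $G$ with displacement less than $1$ at $\tilde y$ arises as a limit of elements of $H_i$, one has $G_0\subseteq H$, and hence $d(g\tilde y,G_0\cdot\tilde y)\ge 1$. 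Your chain argument is essentially this inclusion $G_0\subseteq H$ unwound by hand; the paper's version sidesteps the lifting step you flagged as delicate and yields the slightly stronger quantitative conclusion that $g\tilde y$ sits at distance at least $1$ from the identity-component orbit. One small correction: to match $d_i(\tau^{(i)}_{k+1}\tilde p,\tau^{(i)}_k\tilde p)$ you want $(\tau^{(i)}_k)^{-1}\tau^{(i)}_{k+1}\notin H_{i-1}$ rather than $\tau^{(i)}_{k+1}(\tau^{(i)}_k)^{-1}$; both elements lie outside $H_{i-1}$, so the conclusion is unaffected.
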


\begin{proof}
	On $r_i^{-1}\widetilde{M}$, $\gamma_i$ has displacement $1$ at $\tilde{p}$. By basic properties of short generators, $\gamma_i\tilde{p}$ has distance $1$ from the orbit $H_i\cdot\tilde{p}$, where $H_i=\langle\gamma_1,...,\gamma_{i-1}\rangle$. From equivariant convergence
	$$(r^{-1}_i\widetilde{M},\tilde{p},H_i,\gamma_i)\overset{GH}\longrightarrow(\widetilde{Y},\tilde{y},H,g),$$
	we conclude $d(g\tilde{y},H\cdot\tilde{y})=1$. It is obvious that $H$ contains $G_0$, the connected component of $G$ containing the identity. Thus $d(g\tilde{y},G_0\cdot\tilde{y})\ge 1$ and the orbit $G\cdot \tilde{y}$ is not connected.
\end{proof}

We recall cone splitting principle, which follows from splitting theorem for Ricci limit spaces \cite{CC1}.

\begin{prop}\label{cone_split}
	Let $(X,p)$ be the limit of a sequence of complete $n$-manifolds $(M_i,p_i)$ of $\mathrm{Ric}_{M_i}\ge 0$. Suppose that $X=\mathbb{R}^k\times C(Z)$ is a Euclidean cone with vertex $p=(0,z)$. If there is an isometry $g\in \mathrm{Isom}(X)$ with $g(0,z)\not\in \mathbb{R}^k\times \{z\}$, then $X$ splits isometrically as $\mathbb{R}^{k+1}\times C(Z')$.
\end{prop}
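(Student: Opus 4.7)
The strategy is to invoke the Cheeger--Colding splitting theorem~\cite{CC1}: once I exhibit a line in $X$, that theorem yields an isometric splitting $X = \mathbb{R} \times X'$, and the cone hypothesis will upgrade this to the desired decomposition.

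Set $p' := g(0, z)$, which lies outside $\mathbb{R}^k \times \{z\}$ by hypothesis, and write $d := d(p, p')$. Because $X$ is a metric cone with vertex $p$, the minimizing geodesic from $p$ to $p'$ extends to a ray $\alpha : [0, \infty) \to X$ with $\alpha(0) = p$ and $\alpha(d) = p'$. Since $g$ is an isometry, conjugating the cone dilations at $p$ by $g$ exhibits $X$ as a metric cone with vertex $p'$ as well. Hence the geodesic from $p'$ to $p$ extends to a ray $\beta : [0, \infty) \to X$ with $\beta(0) = p'$ and $\beta(d) = p$, continuing past $p$ to infinity. My plan is to verify that the concatenation
\[
\gamma(t) = \begin{cases} \alpha(t) & \text{if } t \geq 0, \\ \beta(d - t) & \text{if } t \leq 0 \end{cases}
\]
is an isometric embedding of $\mathbb{R}$ into $X$, i.e.\ a line through $p$.

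Granting $\gamma$, the splitting theorem supplies $X = \mathbb{R} \times X'$ with the new $\mathbb{R}$-factor parallel to $\gamma$. Since $p' = (v_0, q_0)$ with $q_0 \neq z$, the initial direction of $\alpha$ at $p$ has nontrivial component in the $C(Z)$-factor, so the new $\mathbb{R}$-factor is not contained in $\mathbb{R}^k \times \{z\}$. Projecting $\gamma$ to $C(Z)$ then produces a line through the vertex $z$, which forces $C(Z) = \mathbb{R} \times C(Z')$ for some length space $Z'$. Combining with the existing $\mathbb{R}^k$ gives $X = \mathbb{R}^{k+1} \times C(Z')$, as required.

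The main obstacle I anticipate is the verification that $\gamma$ really is a line: namely, that $d(\alpha(t), \beta(d+u)) = t + u$ for all $t, u \geq 0$. The triangle inequality, with $p$ as intermediate point, yields the upper bound immediately; the matching lower bound is delicate and must invoke both cone structures---the dilations at $p$ and at $p'$, related via $g$. Once this has been established, the rest is a routine application of the splitting theorem and the characterization of lines in products with metric cones.
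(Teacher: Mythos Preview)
The paper does not prove this proposition: it is stated as a recalled fact (``cone splitting principle'') and attributed directly to the Cheeger--Colding splitting theorem~\cite{CC1}, with no argument given. Your outline is the standard one and is correct; it is exactly the mechanism the paper is implicitly invoking.

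Regarding the obstacle you flag, the lower bound $d(\alpha(s),\beta(d+u))\ge s+u$ drops out of the law of cosines for the cone at $p$. Since $\beta$ is a unit-speed ray from $p'$ with $\beta(d)=p$, the point $\beta(d+u)$ satisfies $d(p,\beta(d+u))=u$ and $d(p',\beta(d+u))=d+u$. Writing $\theta$ for the cone angle at $p$ between the radial directions toward $p'$ and toward $\beta(d+u)$, the cone formula gives
\[
(d+u)^2 \;=\; d^2 + u^2 - 2du\cos\theta,
\]
so $\cos\theta=-1$. Since $\alpha(s)$ lies on the radial ray toward $p'$ and $\beta(d+u)$ lies on the opposite radial ray, the same formula yields $d(\alpha(s),\beta(d+u))^2 = s^2+u^2+2su=(s+u)^2$, and $\gamma$ is a line.

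One small point worth tightening: after you project $\gamma$ to $C(Z)$ and obtain a line there, the conclusion $C(Z)=\mathbb{R}\times C(Z')$ uses the splitting theorem applied to $C(Z)$ itself. This is legitimate because $C(Z)$ is again a Ricci limit with nonnegative Ricci (split the $\mathbb{R}^k$ factor off $X$ one line at a time using~\cite{CC1}); the residual factor $W$ in $C(Z)=\mathbb{R}\times W$ is then a cone since the radial dilations of $C(Z)$ at $z$ preserve the split line and hence descend to dilations of $W$. You gloss over this, but it is routine, as you say.
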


\begin{lem}\label{not_3}
	Case 1 can not happen.
\end{lem}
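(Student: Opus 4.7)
The plan is to derive a contradiction by combining the metric cone structure of $\widetilde Y$ from Cheeger--Colding with the cone splitting principle (Proposition \ref{cone_split}) and the two orbit constraints already extracted: non-discreteness of $G\cdot\tilde y$ from Corollary \ref{non_dis_orb_cor} and non-connectedness from Lemma \ref{non_cnt_orb}. Since $\dim(\widetilde Y)=3$ equals the ambient dimension, the rescaled convergence in $(\star)$ is non-collapsed, so $\widetilde Y$ is a metric cone with apex $\tilde y$. I would write $\widetilde Y=\mathbb{R}^k\times C(Z)$ with $k$ chosen maximal so that $C(Z)$ admits no further $\mathbb{R}$-factor split, and then apply Proposition \ref{cone_split} to each element of $G$: if some $g$ sent $\tilde y$ outside $\mathbb{R}^k\times\{o\}$ (where $o$ denotes the apex of $C(Z)$), the cone would split as $\mathbb{R}^{k+1}\times C(Z')$, contradicting the maximality of $k$. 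Hence $G\cdot\tilde y\subseteq\mathbb{R}^k\times\{o\}$, and it remains to rule out each $k\in\{0,1,2,3\}$.

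I would dispose of the extreme values first. For $k=0$ the orbit is the single point $\{\tilde y\}$, contradicting Corollary \ref{non_dis_orb_cor}. For $k=3$ we get $\widetilde Y=\mathbb{R}^3$, and the Cheeger--Colding volume rigidity forces $\widetilde M$ itself to be isometric to $\mathbb{R}^3$; then $M$ is a flat $3$-manifold and $\Gamma$ is a discrete subgroup of $\mathrm{Isom}(\mathbb{R}^3)$, hence finitely generated by Bieberbach, contradicting our standing assumption. For $k=1$, I would argue that the axis $\mathbb{R}\times\{o\}$ is intrinsically distinguished in $\widetilde Y$ as the set of points whose tangent cone equals $\widetilde Y$, so it is $G$-invariant; since the identity component $G_0$ acts on this line as a connected subgroup of $\mathrm{Isom}(\mathbb{R})$, it must act by translations. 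Hence $G_0\cdot\tilde y$ is either $\{\tilde y\}$ or the entire axis. In the first subcase $G\cdot\tilde y$ is discrete, contradicting Corollary \ref{non_dis_orb_cor}; in the second it equals the line $\mathbb{R}\times\{o\}$ and is connected, contradicting Lemma \ref{non_cnt_orb}.

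The main case is $k=2$. Then $C(Z)$ is $1$-dimensional, and the only $1$-dimensional metric cones are $[0,\infty)$ and $\mathbb{R}$; since the latter would contradict the maximality of $k$, we must have $\widetilde Y=\mathbb{R}^2\times[0,\infty)$. But then the non-$3$-regular locus of $\widetilde Y$ is precisely the boundary $\mathbb{R}^2\times\{0\}$, of Hausdorff codimension $1$ in $\widetilde Y$. This violates the Cheeger--Colding theorem that the singular set of a non-collapsed Ricci limit of manifolds with $\mathrm{Ric}\ge 0$ has Hausdorff codimension at least $2$, completing Case 1. The step I expect to demand the most care is the intrinsic identification of the cone axis in the $k=1$ analysis, together with the claim that on this axis $G_0$ can only act by translations; both are standard within the Cheeger--Colding framework but worth unpacking explicitly.
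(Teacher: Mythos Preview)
Your proposal is correct and follows essentially the same architecture as the paper: from $\dim(\widetilde Y)=3$ deduce non-collapse and the metric cone structure $\widetilde Y=\mathbb{R}^k\times C(Z)$ with $C(Z)$ containing no line, confine $G\cdot\tilde y$ to the $\mathbb{R}^k$-axis via cone splitting, and eliminate each $k\in\{0,1,2,3\}$ using the codimension-$2$ estimate, volume rigidity, and the two orbit constraints. The only cosmetic differences are that for $k=0$ the paper invokes Lemma~\ref{non_cnt_orb} (a point is connected) rather than Corollary~\ref{non_dis_orb_cor}, and for $k=1$ the paper argues more directly that any non-connected orbit of isometries on $\mathbb{R}$ is automatically discrete (classifying closed subgroups of $\mathrm{Isom}(\mathbb{R})$), which bypasses your $G_0$ case split; your version is equivalent once you note that the orbit is closed by properness and hence discrete when $G_0$ fixes $\tilde y$.
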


\begin{proof}
	When $\dim(\widetilde{Y})=3$, $\widetilde{Y}$ is a non-collapsing limit space \cite{CC2}, that is, there is $v>0$ such that
	$$\mathrm{vol}(B_1(\tilde{p},r_i^{-1}\widetilde{M}))\ge v$$
	for all $i$. By relative volume comparison, this implies that $\widetilde{M}$ has Euclidean volume growth
	$$\lim\limits_{r\to\infty}\dfrac{\mathrm{vol}(B_r(\tilde{p}))}{r^n}\ge v.$$
	By \cite{CC2}, $\widetilde{Y}$ is a Euclidean cone $\mathbb{R}^k\times C(Z)$ with vertex $\tilde{y}=(0,z)$, where $C(Z)$ does not contain any line and $z$ is the vertex of $C(Z)$. We rule out all the possibilities of $k\in \{0,1,2,3\}$. 
	
	If $k=3$, then $\widetilde{Y}=\mathbb{R}^3$. Thus $\widetilde{M}$ is isometric to $\mathbb{R}^3$ \cite{Co}.
	
	If $k=2$, then according to co-dimension $2$ \cite{CC2}, actually $\widetilde{Y}=\mathbb{R}^3$.
	
	If $k=1$, then $Y=\mathbb{R}\times C(Z)$. By Proposition \ref{cone_split}, the orbit $G\cdot \tilde{y}$ is contained in $\mathbb{R}\times\{z\}$. Applying Lemma \ref{non_cnt_orb}, we see that $G\cdot \tilde{y}$ is not connected. Note that a non-connected orbit in $\mathbb{R}$ is either a $\mathbb{Z}$-translation orbit, or a $\mathbb{Z}_2$-reflection orbit. In particular, the orbit $G\cdot\tilde{y}$ must be discrete. This contradicts with Corollary \ref{non_dis_orb_cor}.
	
	If $k=0$, then $Y=C(Z)$ with no lines. Again by Proposition \ref{cone_split}, the orbit $G\cdot\tilde{y}$ must be a single point $\tilde{y}$, which is a contradiction to Lemma \ref{non_cnt_orb}.
\end{proof}

\begin{lem}\label{not_2}
	Let $(M,p)$ be an open $n$-manifold with $\mathrm{Ric}_M\ge 0$ and $(\widetilde{M},\tilde{p})$ be its universal cover. Assume that $\Gamma=\pi_1(M,p)$ is torsion free. Then for any $s_i\to \infty$ and any convergent sequence
	\begin{center}
		$\begin{CD}
		(s^{-1}_i\widetilde{M},\tilde{p},\Gamma) @>GH>> 
		(C_\infty\widetilde{M},\tilde{o},G)\\
		@VV\pi V @VV\pi V\\
		(s^{-1}_iM,p) @>GH>> (C_\infty M,o),
		\end{CD}$
	\end{center}
	$\dim(C_\infty\widetilde{M})=\dim(C_\infty M)$ can not happen. In particular, Case 2 can not happen.
\end{lem}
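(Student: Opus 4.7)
The plan is a proof by contradiction. I would assume $\dim(C_\infty\widetilde{M})=\dim(C_\infty M)=:k$ and derive a contradiction by combining Corollary \ref{non_dis_orb_cor} with an equivariant blow-up at a regular point.

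First, by Corollary \ref{non_dis_orb_cor} the orbit $G\cdot\tilde{o}$ is non-discrete in $C_\infty\widetilde{M}$. Since $G$ is a Lie group by Colding-Naber, non-discreteness of $G\cdot\tilde{o}$ is equivalent to the identity component $G_0$ not fixing $\tilde{o}$, so the orbit $G_0\cdot\tilde{o}$ has positive dimension. Moreover, by lower semicontinuity of the orbit dimension for isometric Lie group actions, every $G$-orbit in some open neighborhood $U$ of $\tilde{o}$ has dimension at least one.

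Next, I would locate a point $\tilde{x}\in U$ at which both $C_\infty\widetilde{M}$ near $\tilde{x}$ and $C_\infty M$ near $\pi(\tilde{x})$ look Euclidean. By Colding-Naber, the $k$-regular set has full renormalized measure in each of $C_\infty\widetilde{M}$ and $C_\infty M$. Because $\pi$ is the quotient map of an isometric group action, the preimage of a full-measure set in $C_\infty M$ is still full measure in $C_\infty\widetilde{M}$, so I can select $\tilde{x}\in U$ which is $k$-regular in $C_\infty\widetilde{M}$ with $\pi(\tilde{x})$ $k$-regular in $C_\infty M$. Since $\tilde{x}\in U$, I may fix a one-parameter subgroup $\{g_t\}\subset G_0$ with $g_t\tilde{x}\neq\tilde{x}$ for small $t>0$.

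Finally, I would blow up equivariantly at $\tilde{x}$. For $\lambda_i\to\infty$, the $k$-regularity of $\tilde{x}$ and $\pi(\tilde{x})$ forces $(\lambda_i C_\infty\widetilde{M},\tilde{x})\to(\mathbb{R}^k,0)$ and $(\lambda_i C_\infty M,\pi(\tilde{x}))\to(\mathbb{R}^k,0)$. After passing to a subsequence for equivariance, $(\lambda_i C_\infty\widetilde{M},\tilde{x},\lambda_i G)\to(\mathbb{R}^k,0,G_\infty)$ for some closed subgroup $G_\infty\subset \mathrm{Isom}(\mathbb{R}^k)$, and $\mathbb{R}^k/G_\infty$ is a tangent cone of $C_\infty M$ at $\pi(\tilde{x})$. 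After reparametrizing the rescaled one-parameter subgroup by the metric displacement of $\tilde{x}$, its limit is a non-trivial one-parameter translation subgroup in $G_\infty$. Hence $\dim(\mathbb{R}^k/G_\infty)\le k-1$, contradicting the fact that this quotient equals the tangent cone at the $k$-regular point $\pi(\tilde{x})$, namely $\mathbb{R}^k$.

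The main obstacle is producing a genuine translation in $G_\infty$: one needs the asymptotic speed $d(g_t\tilde{x},\tilde{x})/t$ to have a positive limit as $t\to 0^+$, so that reparametrizing $t=s/(\lambda_i c)$ yields a unit-speed line in the blown-up limit. For isometric Lie group actions on Ricci limits the orbit map $t\mapsto g_t\tilde{x}$ is Lipschitz, and at almost every point of a positive-dimensional orbit a positive metric derivative in some direction of $G_0$ exists; pulling $\tilde{x}$ from the intersection of the two regular sets with the full-measure set where this derivative exists and is strictly positive makes the blow-up argument go through.
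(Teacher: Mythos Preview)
Your strategy is the same as the paper's: assume $\dim(C_\infty\widetilde{M})=\dim(C_\infty M)=k$, locate a point $\tilde q$ that is $k$-regular and projects to a $k$-regular point, use $G_0\neq\{e\}$ (from Corollary \ref{non_dis_orb_cor}) to produce group elements with small displacement at $\tilde q$, blow up, and contradict $\mathbb{R}^k/H=\mathbb{R}^k$. The differences are in execution, and in two places the paper is cleaner.

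\emph{Finding the doubly regular point.} You intersect $\mathcal{R}_k(C_\infty\widetilde{M})$ with $\pi^{-1}(\mathcal{R}_k(C_\infty M))$ and appeal to full measure of both. The second claim---that the $\pi$-preimage of a full-measure set is full measure for the (separately renormalized) limit measure upstairs---is not obvious and you do not justify it. The paper avoids this entirely: it only uses that $\pi^{-1}(\mathcal{R}_k(C_\infty M))$ is \emph{dense} (immediate since $\pi$ is an open surjection), picks $\tilde q$ there with $g\tilde q\neq\tilde q$, and then proves $\tilde q$ is $k$-regular after the fact: any tangent cone at $\tilde q$ submets onto $\mathbb{R}^k$, hence splits an $\mathbb{R}^k$-factor, and by the dimension bound of Honda/Kapovitch--Li it must equal $\mathbb{R}^k$. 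This also makes your ``lower semicontinuity of orbit dimension'' unnecessary; one only needs that the $G_0$-fixed set is closed.

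\emph{Producing the nontrivial limit element.} Your ``main obstacle''---controlling the metric derivative of $t\mapsto g_t\tilde x$ to obtain a genuine one-parameter translation subgroup in $G_\infty$---is not needed. The paper simply picks, along any one-parameter subgroup not fixing $\tilde q$, elements $g_j$ with $d(g_j\tilde q,\tilde q)=1/j$ (intermediate value), rescales by $j$, and passes to a limit $h$ with $d(h\tilde o',\tilde o')=1$. A single such $h$ already contradicts $\mathbb{R}^k/H=\mathbb{R}^k$; no full translation flow is required. So your last paragraph can be deleted and the argument closes immediately.
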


\begin{proof}[Proof of Lemma \ref{not_2}]
	We claim that $G$ is a discrete group when $\dim({C_\infty\widetilde{M}})=\dim(C_\infty M)=k$. If the claim holds, then the desired contradiction follows from Corollary \ref{non_dis_orb_cor}.
	
	It remains to verify the claim. Suppose that $G_0$ is non-trivial, then we pick $g\not=e$ in $G_0$. Note that there is a $k$-regular point $\tilde{q}\in C_\infty \widetilde{M}$ such that $d(g\tilde{q},\tilde{q})>0$ and $\tilde{q}$ projects to a $k$-regular point $q\in C_\infty M$. In fact, let $\mathcal{R}_k(C_\infty M)$ be the set of $k$-regular points in $C_\infty M$. Since $\mathcal{R}_k(C_\infty M)$ is dense in $C_\infty M$, its pre-image $\pi^{-1}(\mathcal{R}_k(C_\infty M))$ is also dense in $C_\infty \widetilde{M}$. Let $\tilde{q}$ be a point in the pre-image such that $d(g\tilde{q},\tilde{q})>0$. Note that any tangent cone at $\tilde{q}$ splits $\mathbb{R}^k$-factor isometrically. By Proposition 3.78 in \cite{Hon2} (also see Corollary 1.10 in \cite{KL}), it follows that any tangent cone at $\tilde{q}$ is isometric to $\mathbb{R}^k$. In other words, $\tilde{q}$ is $k$-regular.    
	
	Along a one-parameter subgroup of $G_0$ containing $g$, we can choose a sequence of elements $g_j\in G_0$ with $d(g_j\tilde{q},\tilde{q})=1/j\to 0$. We consider a tangent cone at $\tilde{y}$ and $y$ respectively coming from the sequence $j\to\infty$. Passing to some subsequences if necessary, we obtain
	\begin{center}
		$\begin{CD}
		(jC_\infty\widetilde{M},\tilde{q},G,g_j) @>GH>> 
		(C_{\tilde{q}} C_\infty\widetilde{M},\tilde{o}',H, h)\\
		@VV\pi V @VV\pi V\\
		(jC_\infty M,q) @>GH>> (C_qC_\infty M,o').
		\end{CD}$
	\end{center}
	with $C_{\tilde{q}} C_\infty\widetilde{M}/H=C_qC_\infty M$ and $d(h \tilde{o}',\tilde{o}')=1$. On the other hand, since both $q$ and $\tilde{q}$ are $k$-regular, $C_{\tilde{q}} C_\infty\widetilde{M}=C_qC_\infty M=\mathbb{R}^k$. This is a contradiction to $H\not=\{e\}$. Hence the claim holds. 
\end{proof}

To rule out the last case $\dim(Y)=1$, we recall Sormani's pole group theorem \cite{Sor}. We say that a length space $X$ has a pole at $x\in X$, if for all $y\not=x$, there is a ray starting from $x$ and going through $y$.

\begin{thm}\cite{Sor}\label{non_polar}
	Let $(M,p)$ be an open $n$-manifold with $\mathrm{Ric}_M\ge 0$ and $(\widetilde{M},\tilde{p})$ be its universal cover. Suppose that $\Gamma=\pi_1(M,p)$ has infinitely many short generators $\{\gamma_1,...,\gamma_i,...\}$. Then in the following tangent cone at infinity of $M$
	$$(r_i^{-1}M,p)\overset{GH}\longrightarrow(Y,y),$$
	$Y$ can not have a pole at $y$, where $r_i=d(\gamma_i\tilde{p},\tilde{p})\to\infty$.
\end{thm}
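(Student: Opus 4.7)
The plan is to argue by contradiction: assume $Y$ has a pole at $y$ and produce a violation of the short-generator property for some $\gamma_i$ with $i$ large. The key preliminary step is \emph{Sormani's halfway lemma}. For each $i$, realize $\gamma_i$ as a minimizing loop $c_i$ at $p$ of length $r_i$ by projecting a minimizing segment from $\tilde{p}$ to $\gamma_i\tilde{p}$; I claim any midpoint $x_i$ of $c_i$ satisfies $d(x_i,p)=r_i/2$. The upper bound is immediate from the triangle inequality. For the lower bound, suppose $d(x_i,p)<r_i/2$ and let $\sigma$ be a minimizing geodesic from $p$ to $x_i$ with length $<r_i/2$; splitting $c_i$ at its midpoint into halves $\alpha_i$ and $\beta_i$, the loops $\alpha_i\bar{\sigma}$ and $\sigma\bar{\beta}_i$ based at $p$ each have length $<r_i$, and their concatenation is homotopic to $c_i$. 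Writing $\gamma_i=\tau\eta$ with $|\tau|,|\eta|<r_i$, the short-generator condition forces both $\tau,\eta\in\langle\gamma_1,\dots,\gamma_{i-1}\rangle$, hence $\gamma_i\in\langle\gamma_1,\dots,\gamma_{i-1}\rangle$, a contradiction.

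With the halfway lemma in hand, rescale by $r_i^{-1}$: then $c_i$ has length $1$ and its midpoint $x_i$ lies exactly at distance $1/2$ from $p$. Passing to a subsequence, $c_i$ converges to a path $c_\infty$ of length $1$ from $y$ to $y$ in $Y$ whose midpoint $x_\infty$ satisfies $d(x_\infty,y)=1/2$, so that both halves of $c_\infty$ are minimizing geodesics from $y$ to $x_\infty$. Now invoke the pole assumption: there is a ray $\rho$ from $y$ through $x_\infty$, meaning $\rho$ extends past $x_\infty$ with distance to $y$ increasing at unit speed. I would approximate $\rho$ at finite $i$ by actual geodesic rays in $M$ emanating from $p$, lift them to $\widetilde{M}$, and compare the lifted rays past time $r_i/2$ with the two lifts $\tilde{x}_i$ and $\gamma_i^{-1}\tilde{x}_i$ of $x_i$ --- both of which lie at distance exactly $r_i/2$ from $\tilde{p}$ (the latter because $d(\tilde{p},\gamma_i^{-1}\tilde{x}_i)=d(\gamma_i\tilde{p},\tilde{x}_i)=r_i/2$).

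The aim of the last step is to use the ray extension afforded by the pole property to shortcut one of the halves of $c_i$ and produce a loop at $p$ of length strictly less than $r_i$ representing an element of $\Gamma\setminus\langle\gamma_1,\dots,\gamma_{i-1}\rangle$, contradicting the short-generator property of $\gamma_i$. The main obstacle is precisely this transfer from the qualitative pole structure in $Y$ to a quantitative shortening at finite $i$: one needs sufficient uniform control on the convergence of rays, and on the separation of distinct lifts in $\widetilde{M}$, to guarantee that the shortening is strictly positive rather than disappearing in the error terms. This is where Sormani's uniform cut lemma --- an excess-type estimate for manifolds of nonnegative Ricci curvature --- enters, converting the qualitative pole hypothesis into a genuine quantitative shortening and completing the contradiction.
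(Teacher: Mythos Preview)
The paper does not prove this theorem: it is quoted from \cite{Sor} and used as a black box in the proof of Lemma~\ref{not_1}. So there is no in-paper argument to compare against.

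That said, your outline is recognisably the skeleton of Sormani's original proof, and the two main ingredients you name---the halfway lemma and the uniform cut lemma---are exactly the right ones. Your proof of the halfway lemma is correct. One point deserves sharpening: the contradiction at the end is not obtained by ``shortcutting one half of $c_i$'' to build a loop of length $<r_i$ representing an element outside $\langle\gamma_1,\dots,\gamma_{i-1}\rangle$. Rather, the uniform cut lemma (which rests on the Abresch--Gromoll excess estimate) says that because the midpoint $x_i$ is a genuine cut point of $p$ along each half of $c_i$, there is a dimensional constant $S_n>0$ such that for any $q$ with $d(p,q)\ge 3r_i$ one has
\[
d(x_i,q)\ \ge\ d(p,q)-\tfrac{r_i}{2}+2S_n\,r_i.
\]
Rescaling by $r_i^{-1}$ and passing to the limit, this forces $d(x_\infty,\rho(t))\ge t-\tfrac12+2S_n$ for any ray $\rho$ from $y$ and any $t\ge 3$, which directly contradicts the pole hypothesis that some ray from $y$ passes through $x_\infty$. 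So the contradiction lives in the limit geometry $(Y,y)$, not in the short-generator minimality of $\gamma_i$; no further lifting to $\widetilde{M}$ is needed at this stage.
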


\begin{lem}\label{not_1}
	Case 3 can not happen.
\end{lem}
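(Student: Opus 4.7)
The plan is to contradict Sormani's pole theorem (Theorem \ref{non_polar}) by showing that under the hypothesis $\dim(Y) = 1$, the base point $y$ must be a pole of $Y$.

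First, I would classify $Y$. Since $\dim(Y) = 1$ in the Colding--Naber sense, and this coincides with the Hausdorff dimension for limits coming from $3$-manifolds (as recalled in the introduction), $Y$ is a $1$-dimensional Alexandrov space of nonnegative curvature. Because $M$ is non-compact, choosing points $q_i \in M$ with $d(p, q_i) = r_i$ and taking a subsequential limit produces a point at distance $1$ from $y$ in $Y$; iterating this construction yields a ray from $y$, so $Y$ is non-compact. The classification of non-compact, nonnegatively curved $1$-dimensional Alexandrov spaces then forces $Y \cong \mathbb{R}$ or $Y \cong [0, \infty)$.

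If $Y \cong \mathbb{R}$, then every point, and in particular $y$, is a pole, directly contradicting Theorem \ref{non_polar}. Thus I may assume $Y \cong [0, \infty)$, whose only pole is the endpoint $0$, and the task reduces to showing $y = 0$. Since Cases 1 and 2 are already excluded, $\dim \widetilde{Y} = 2$ and the generic $G$-orbit in $\widetilde{Y}$ is $1$-dimensional. Moreover, by Corollary \ref{non_dis_orb_cor} and Lemma \ref{non_cnt_orb}, the fiber $G \cdot \tilde{y} = \pi^{-1}(y)$ is non-discrete and non-connected. Suppose for contradiction that $y \in (0, \infty)$ is an interior point. Pick a $1$-regular $q \in Y$ close to $y$ and, as in the proof of Lemma \ref{not_2}, a $2$-regular preimage $\tilde{q} \in \pi^{-1}(q)$; the equivariant tangent cone at $\tilde q$ is then $\mathbb{R}^2$ with a limit subgroup $H \subset \mathrm{Isom}(\mathbb{R}^2)$ acting with quotient $\mathbb{R}$, hence through a one-parameter translation subgroup. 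I would then iterate Proposition \ref{cone_split}, together with a transport of this local $\mathbb{R}$-splitting along the $G_0$-orbit back to $\tilde{y}$, to produce a line in $\widetilde{Y}$ through $\tilde{y}$ transverse to $G_0 \cdot \tilde{y}$; its projection to $Y$ would be a bi-infinite geodesic through $y$, contradicting $Y \cong [0, \infty)$.

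The main obstacle is this final propagation step, which must turn a local $\mathbb{R}$-splitting at an interior regular point $\tilde q$ into a global line through $\tilde y$; controlling the splitting direction across the orbit and through possibly singular strata of $\widetilde{Y}$ is delicate. A cleaner alternative, which I would pursue in parallel, is to directly construct a ``backward ray'' from $y$ by taking equivariant limits of geodesic segments that witness the non-discreteness of $G \cdot \tilde{y}$, thereby producing a pole at $y$ without passing through a global splitting argument and contradicting Theorem \ref{non_polar} in either subcase.
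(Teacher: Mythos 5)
Your opening steps match the paper: classify $Y$ as $(-\infty,\infty)$ or $[0,\infty)$ (the paper cites \cite{Hon1} and \cite{Chen} for $Y$ being a topological $1$-manifold), kill $Y=\mathbb{R}$ via Theorem \ref{non_polar}, and reduce to $Y=[0,\infty)$ with $y$ an interior point. But the heart of the lemma is exactly that last configuration, and there your argument has a genuine gap — one you yourself flag as ``the main obstacle.'' The proposed propagation of a local $\mathbb{R}$-splitting at a regular point $\tilde q$ into a global line through $\tilde y$ transverse to the orbit is not a known mechanism and is precisely the kind of local-to-global step that fails for Ricci limit spaces: tangent cones being Euclidean at a dense set of points does not produce lines in the space itself. (There is also a smaller inaccuracy: the preimage of a $1$-regular point of $Y$ in the $2$-dimensional space $\widetilde Y$ need only have tangent cones splitting one $\mathbb{R}$-factor, not be $2$-regular; the argument of Lemma \ref{not_2} gives $k$-regularity upstairs only when $\dim\widetilde Y=k$ as well.) The paper's own Example \ref{tree} is a warning here: a surface with curvature bounded below can genuinely have tangent cone at infinity $[0,\infty)$ with basepoint not at $0$, so no argument confined to the single scale $r_i$ and the metric structure of $Y,\widetilde Y$ alone can succeed without further input. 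Your fallback sketch (``backward ray from $y$ via equivariant limits witnessing non-discreteness'') is likewise not a proof; $Y$ is already identified as $[0,\infty)$, so there is no backward ray to be found in $Y$, and you give no mechanism forcing one.

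What the paper actually does is introduce a second scale. Take $x_i\to 0\in Y$, the ray $\alpha$ asymptotic to the ray from $y$, and minimal geodesics $c_i$ from $x_i$ to $\alpha(dr_i)$; these must pass near $p$ at scale $r_i$ because $Y=[0,\infty)$ forces the segment from $0$ to $2d$ to go through $y$. Let $d_i=d_M(p,c_i)$. If $d_i$ stays bounded, the $c_i$ subconverge to a line in $M$ itself, and Cheeger--Gromoll splitting contradicts $Y=[0,\infty)$. If $d_i\to\infty$, rescale by $d_i^{-1}$ (note $r_i^{-1}d_i\to 0$, so this is a genuinely different blow-down): the limit $Y'$ contains a line at distance $1$ from the basepoint, which rules out $\dim(Y')=1$, and the remaining possibilities are handled by Lemmas \ref{not_3} and \ref{not_2} — which is why those lemmas are formulated for arbitrary scales $s_i\to\infty$, not just $r_i$. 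This dichotomy on $d_i$ is the missing idea in your proposal.
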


\begin{proof}
	By \cite{Hon1} (also see \cite{Chen}), $Y$ is a topological manifold of dimension $1$. Since $Y$ is non-compact, $Y$ is either a line $(-\infty,\infty)$ or a half line $[0,\infty)$. By Theorem \ref{non_polar}, $Y$ can not have a pole at $y$. Thus there is only one possibility left: $Y=[0,\infty)$ but $y$ is not the endpoint $0\in [0,\infty)$. Put $d=d_Y(0,y)>0$. We rule out this case by a rescaling argument and Lemmas \ref{not_3}, \ref{not_2} above. (In general, it is possible for an open manifold having a tangent cone at infinity as $[0,\infty)$ with base point not being $0$. See example \ref{tree}.) 
	
	Let $\alpha(t)$ be a unit speed ray in $M$ starting from $p$ and converging to the unique ray from $y$ in $Y=[0,\infty)$ with respect to the sequence $(r_i^{-1}M,p)\overset{GH}\longrightarrow(Y,y)$. Let $x_i\in r_i^{-1}M_i$ be a sequence of points converging to $0\in Y$, then $r_i^{-1}d_M(p,x_i)\to d$. For each $i$, let $c_i(t)$ be a minimal geodesic from $x_i$ to $\alpha(dr_i)$, and $q_i$ be a closest point to $p$ on $c_i$. We reparametrize $c_i$ so that $c_i(0)=q_i$. With respect to $(r_i^{-1}M,p)\overset{GH}\longrightarrow(Y,y)$, $c_i$ subconverges to the unique segment between $0$ and $2d\in [0,\infty)$. Clearly,
	$$r_i^{-1}d_M(x_i,\alpha(dr_i))\to 2d, \quad r_i^{-1}d_i\to 0,$$
	where $d_i=d_M(p,c_i(0))$.
	
	If $d_i\to\infty$, then we rescale $M$ and $\widetilde{M}$ by $d_i^{-1}\to 0$. Passing to some subsequences if necessary, we obtain
	\begin{center}
		$\begin{CD}
		(d^{-1}_i\widetilde{M},\tilde{p},\Gamma) @>GH>> 
		(\widetilde{Y}',\tilde{y}',G')\\
		@VV\pi V @VV\pi V\\
		(d^{-1}_iM,p) @>GH>> (Y',y').
		\end{CD}$
	\end{center}
	If $\dim(Y')=1$, then we know that $Y'=(-\infty,\infty)$ or $[0,\infty)$. On the other hand, since
	$$d_i^{-1}d_M(c_i(0),x_i)\to\infty,\quad d_i^{-1}d_M(c_i(0),\alpha(dr_i))\to\infty, \quad d_i^{-1}d_M(c_i,p)=1,$$
	$c_i$ subconverges to a line $c_\infty$ in $Y'$ with $d(c_\infty,y')=1$. Clearly this can not happen in $(-\infty,\infty)$ nor $[0,\infty)$. If $\dim(\widetilde{Y}')=3$, then $\widetilde{M}$ has Euclidean volume growth and thus $\dim(\widetilde{Y})=3$. This case is already covered in Lemma \ref{not_3}. The only situation left is $\dim(\widetilde{Y}')=\dim(Y')=2$. By Lemma \ref{not_2}, this also leads to a contradiction. In conclusion, $d_i\to\infty$ can not happen.
	
	If there is some $R>0$ such that $d_i\le R$ for all $i$, then on $M$, $c_i$ subconverges to a line $c$ with $c(0)\in B_{2R}(p)$. Consequently, $M$ splits a line isometrically \cite{CG}, which contradicts with $Y=[0,\infty)$. This completes the proof.
\end{proof} 

\begin{exmp}\label{tree}
	We construct a surface $(S,p)$ isometrically embedded in $\mathbb{R}^3$ such that $S$ has a tangent cone at infinity as $[0,\infty)$, but $p$ does not correspond to $0$. We first construct a subset of $xy$-plane by gluing intervals. Let $r_i\to\infty$ be a positive sequence with $r_{i+1}/r_i\to\infty$. Starting with a interval $I_1=[-r_1,r_2]$, we attach a second interval $I_2=[-r_3,r_4]$ perpendicularly to $I_1$ by identifying $r_2\in I_1$ and $0\in I_2$. Repeating this process, suppose that $I_{k}$ is attached, then we attach the next interval $I_{k+1}=[-r_{2k+1},r_{2k+2}]$ perpendicularly to $I_k$ by identifying $r_{2k}\in I_k$ and $0\in I_{k+1}$. In the end, we get a subset $T$ in the $xy$-plane consisting of segments. We can smooth the $\epsilon$-neighborhood of $T$ in $\mathbb{R}^3$ so that it has sectional curvature $\ge -C$ for some $\epsilon,C>0$. We call this surface $S$ and let $p\in S$ be a point closest to $0\in I_1$ as base point. If we rescale $(S,p)$ by $r_{2k+1}^{-1}$, then 
	$$(r_{2k+1}^{-1}S,p)\overset{GH}\longrightarrow ([-1,\infty),0)$$
	because $r_{i+1}/r_i\to\infty$. In other words, $S$ has a tangent cone at infinity as the half line, but the base point does not correspond to the end point in this half line. 
\end{exmp}

\end{document}